\numberwithin{equation}{section}
\newcommand\exc{\mathop{ \rm exc}}
\newcommand\inv{\mathop{ \rm inv}}
\newcommand\cyc{\mathop{ \rm cyc}}
\newcounter{num}
\def\D{\mathcal{D}}
\def\N{\mathbb{ N}}
\def\R{\mathbb{ R}}
\def\S{\mathfrak{ S}}
\def\x{\mathbf{ x}}
\def\y{\mathbf{ y}}
\def\cyc{\mathop{\rm cyc}}
\def\rlm{\mathop{\rm rlm}}
\def\neg{\mathop{\rm neg}}
\def\nsum{\mathop{\rm nsum}}
\newtheorem{thm}{Theorem}
\newtheorem{lem}{Lemma}
\newtheorem{prop}{Proposition}
\newtheorem{cor}{Corollary}
\newtheorem{rem}{Remark}
\newtheorem{example}{Example}
\begin{document}

\title[Counting  derangements ]{Counting  derangements with signed right-to-left minima and excedances}
\author{Yanni Pei}
\address[Yanni Pei]{School of Mathematical Sciences,
 Dalian University of Technology, Dalian 116024, P. R. China}
\email{peiyanni@hotmail.com}

\author[Jiang Zeng]{Jiang Zeng}
\address[Jiang Zeng]{Univ Lyon, Universit\'e Claude Bernard Lyon 1, CNRS UMR 5208, Institut Camille Jordan, 43 blvd. du 11 novembre 1918, F-69622 Villeurbanne cedex, France}
\email{zeng@math.univ-lyon1.fr}
\date{\today}

\begin{abstract}
Recently  Alexandersson and Getachew proved some 
multivariate  generalizations of a 
 formula for enumerating signed excedances  in  derangements. In this paper  we first relate  their work to  a recent continued fraction  for  permutations  and  confirm  some of their observations. Our second main result is two refinements of 
their multivariate identities, which clearly explain  
the meaning  of each term  in their main  formulas.
 We
also  explore   some
similar  formulas  for permutations of type B.
\end{abstract}

\keywords{Derangements, Right-to-left minima, Excedances, Cycles, Laguerre polynomials}
\maketitle
\section{Introduction}
Over the last thirty years much work has been done  on enumeration over the symmetric group (and more generally  Coxeter or Weyl  groups), taking
into account also the sign (or one dimensional character) of each permutation, see 
\cite{DF92, Wa92, AGR05, Zh13, ELL21, KZ03, Mo15, Si16, AG21} and the references therein.  

In this paper we mainly study signed statistics over
derangements. Let $\sigma$ be   
a permutation  of 
the set  $[n]:=\{1,\cdots,n\}$, written as a word $\sigma=\sigma(1)\sigma(2)\cdots\sigma(n)$. Then 
$\sigma$  is called \emph{derangement} if $\sigma(i)\neq i$ for 
 $i\in [n]$.
The index  $i$ (resp. $\sigma(i)$) is an \emph{excedance index} (resp. value) if $\sigma(i)>i$.
Similarly, we say that $i$ (resp. $\sigma(i)$) is a \emph{right-to-left minimum  index} (resp. value) if $\sigma(i)<\sigma(j)$ for all $j>i$.  
Let  $\S_n$ (resp. $\D_n$) be the  set of permutations (resp. derangements) of $[n]$.
For $\sigma\in \S_n$, 
the sets of indices and values of  excedances and 
left-to-right minima  are denoted, respectively, by
\begin{align*}
\mathrm{EXCi(\sigma)}&=\{i\in [n] \mid 
  \textrm{and}\; \sigma(i)>i\};\\ 
\mathrm{EXCv(\sigma)}&=\{\sigma(i)\mid  i\in \mathrm{EXCi(\sigma)}\};\\ 
\mathrm{RLMi(\sigma)}&=\{i\in [n]\mid 
\sigma(i)<\sigma(j)\; \textrm{for}\;  j\in \{i+1, \ldots, n\}\};\\ 
\mathrm{RLMv(\sigma)}&=\{\sigma(i)\mid i\in \mathrm{RLMi(\sigma)}\}.
\end{align*}
Note that 
$\mathrm{EXCi(\sigma)}\cap \mathrm{RLMi(\sigma)}=\emptyset$ and 
	$\mathrm{EXCv(\sigma)}\cap \mathrm{RLMv(\sigma)}=\emptyset.$
Let $\exc(\sigma)$, $\rlm(\sigma)$ and $\cyc(\sigma)$ be respectively the number of excedances, right-to-left minima and cycles of the permutation $\sigma$.
It is well-known, see \cite{Ze93, Br00, KZ03}, that 
\begin{subequations}\label{eq:abc}
\begin{equation}\label{der:cyc:exc}
	\sum_{\sigma\in \D_n}(-1)^{\cyc(\sigma)}x^{\exc(\sigma)}=-(x+x^2+\cdots +x^{n-1}).
\end{equation}
As $\inv(\sigma)\equiv n-\cyc(\sigma)\pmod{2}$,
formula \eqref{der:cyc:exc} can be rephrased as 
\begin{equation}\label{der:inv:exc}
	\sum_{\sigma\in \D_n}(-1)^{\inv(\sigma)}x^{\exc(\sigma)}=(-1)^{n-1}(x+x^2+\cdots +x^{n-1}).
\end{equation}
\end{subequations}
which is in \cite[Proposition 4.3]{MR03}.
In this paper we shall freely switch from the $\inv$-version to $\cyc$-version of an identity and vice versa. 
In 2003 Ksavrelof and the second author \cite{KZ03} proved 
the following  refinement of \eqref{eq:abc}:
\begin{equation}\label{eq:kZ03}
\sum_{\sigma\in \D_{n},\; \sigma(n)=j}(-1)^{\cyc(\sigma)}x^{\exc(\sigma)}=-x^{n-j}\qquad (1\leq j\leq n-1).
\end{equation}
Similar  results  about excedances for Coxeter groups and colored permutation groups  have been  achieved  in \cite{BG05, Si11, Zh13, Si16}.

In 2021, 
Alexandersson and Getachew~\cite{AG21, AG21a} came up with two multivariate refinements of \eqref{eq:abc}:
\begin{subequations}\label{eq:AG}
\begin{align}\label{eq:AG1}
\sum_{\sigma\in \D_{n}}(-1)^{\inv( \sigma)}
\biggl( \prod_{j\in \mathrm{RLMv}(\sigma)}x_j \biggr) 
\biggl( \prod_{j\in \mathrm{EXCv}(\sigma)}y_j \biggr)  =(-1)^{n-1}\sum_{j=1}^{n-1}x_1\cdots x_j y_{j+1}\cdots y_n,
\end{align}
and 
\begin{align}\label{eq:AG2}
\sum_{\sigma\in \D_{n}}(-1)^{\inv( \sigma)}
\biggl( \prod_{j\in \mathrm{RLMi}(\sigma)}x_j \biggr) 
\biggl( \prod_{j\in \mathrm{EXCi}(\sigma)}y_j \biggr)  =(-1)^{n-1}\sum_{j=1}^{n-1}y_1\cdots y_j x_{j+1}\cdots x_n.
\end{align}
\end{subequations}
These identities promted us to seek for 
 possible multivariate analogues of \eqref{eq:kZ03}, 
 which would  refine  \eqref{eq:AG} consequently.  It is somehow surprising that 
our findings  do fulfill this requirement.
More precisely, for $j\in [n-1]$ our
two  multivariate analogues of \eqref{eq:kZ03}  read as follows:
\begin{subequations}\label{thm:PZ}
\begin{align}\label{thm:pz1}
	\sum_{\sigma\in \D_{n}, \, \sigma(n)=j}(-1)^{\cyc (\sigma)}
	\biggl( \prod_{i\in \mathrm{RLMv}(\sigma)}x_i \biggr) 
	\biggl( \prod_{i\in \mathrm{EXCv}(\sigma)}y_i \biggr)  =-x_1\cdots x_j y_{j+1}\cdots y_n,
\end{align}	
and 
\begin{align}\label{thm:pz2}
	\sum_{\sigma\in \D_{n}, \,\sigma(n+1-j)=1}(-1)^{\cyc (\sigma)}
	\biggl( \prod_{i\in \mathrm{RLMi}(\sigma)}x_i \biggr) 
	\biggl( \prod_{i\in \mathrm{EXCi}(\sigma)}y_i \biggr)  
	=- y_{1}\cdots y_{n-j}x_{n+1-j}\cdots x_n
\end{align}
\end{subequations}
from which  eqs. \eqref{eq:AG} follow by summing $j$ on $[n-1]$.

On the other hand,   Sokal and the second author \cite{SZ22} studied some multivariate Eulerian polynomials  whose ordinary generating functions have nice J-fractions  formulas. We note  that 
the specialization of the  
polynomial $\widehat{Q}_n$  in \cite[(2.29)]{SZ22}  
with $x_1=xy$, $x_2=x$, $u_2=v_1=y_1=1$, $u_1=v_2=y_2=y$,
$w_\ell=0$ turns out to be  the  enumerative  polynomial of derangements
\begin{align}\label{D_n(x,y)}
	D_n(x,y,\lambda):=\sum_{\sigma\in \D_n}\lambda^{\cyc (\sigma)}x^{\rlm(\sigma)}y^{\exc(\sigma)},
\end{align}
and Theorem 2.4 in  
\cite{SZ22} reduces to the following result
	\begin{subequations}
		\begin{align}\label{eq:J-fraction}
			\sum_{n=0}^\infty D_n(x,y, \lambda)t^n
			&=\cfrac{1}
			{1-\cfrac{\lambda xy\, t^2}{1-(x+y)\, t-\cfrac{(\lambda+1)(x+1)y\, t^2}{1-
						(x+2y+1)\, t-\cfrac{(\lambda+2)(x+2)y\, t^2}{\cdots}}}},
		\end{align}
		where the coefficients under the (n+1)-th row are $\gamma_0=0$ and for $n\geq 1$, 
		\begin{align}
			\gamma_{n}&=x+ny+n-1,\\
			\beta_{n}&=(\lambda+n-1)(x+n-1)y.
		\end{align}
	\end{subequations}
It is interesting to note that a special 
  case of \eqref{eq:AG1} does follow from \eqref{eq:J-fraction}. Indeed, 
setting $\lambda=-1$ in  \eqref{eq:J-fraction} yields 
\begin{equation}
	\sum_{n=0}^\infty D_n(x,y, -1)t^n=\frac{1-(x+y)t}{(1-xt)(1-yt)}=\frac{\alpha}{1-xt}+\frac{\beta}{1-yt}
\end{equation}
with $\alpha=-y/(x-y)$ and $\beta=x/(x-y)$,   equivalently,
\begin{equation}\label{spe:AG}
	D_n(x,y, -1)=\frac{xy}{x-y}(y^{n-1}-x^{n-1})=-\sum_{j=1}^{n-1}x^jy^{n-j}.
\end{equation}
which is  \eqref{eq:AG1} with 
$x_i=x$ and $y_i=y$ for $i\in [n]$.

This paper is organized as follows.  
In Section~\ref{laguerre}, we shall exploit  the J-fraction \eqref{eq:J-fraction} to confirm several observations in \cite{AG21} on 
$\mu_n(x):=D_n(x,1,1)$
and prove that the polynomials $\mu_n(x)$  are moments of 
\emph{co-recursive Laguerre  polynomials}. 
We then prove \eqref{thm:PZ} (see Theorem~\ref{thm:pz}) by constructing  {\it weight-preserving and  sign-reversing} 
(\textsc{wpsr}) involutions on derangements  in Section~\ref{type A}.
Finally, 
we establish two similar multivariate identities for permutations of  type B, which unify some results in \cite{AG21, Zh13} for permutations of both type A and type B in Section~\ref{type B}.


\section{Derangements with right-to-left minima and Laguerre polynomials}\label{laguerre}
Let  $d_{n,k}$ be  the number of derangements  in $\D_n$  with $k$ right-to-left minima, namely
\begin{align}\label{def:Dpoly}
\mu_n(x):=\sum_{\sigma\in \D_n}x^{\rlm(\sigma)}=\sum_{k=1}^{n} d_{n,k}x^k.
\end{align}
At the end of \cite{AG21} Alexandersson and  Getachew speculate some open problems about  the 
coefficients $d_{n,k}$.  
We note that  a more general multivariate polynomial 
of $\mu_n(x)$ had been studied in  a recent paper by Sokal and the second author~\cite{SZ22}. More precisely, 
the  
polynomial $\widehat{Q}_n$  in \cite[(2.29)]{SZ22}  
with $x_1=x_2=x$, $u_2=v_1=y_1=u_1=v_2=y_2=\lambda=1$ and $w_\ell=0$  becomes $\mu_n(x)$, therefore we derive 
from
\cite[Theorem 2.4]{SZ22} that
\begin{subequations}\label{cf:Drlm}
\begin{align}\label{cf:D}
	\sum_{n=0}^\infty \mu_n(x)t^n=\cfrac{1}
	{1-\cfrac{x\, t^2}{1-(x+1)\, t-\cfrac{2(x+1)\, t^2}{1-
				(x+3)\, t-\cfrac{3(x+2)\, t^2}{\cdots}}}},
\end{align}
where the coefficients under the (n+1)-th row are $\gamma_0=0$ and for $n\geq 1$, 
		\begin{align}
			\gamma_{n}&=x+2n-1,\\
			\beta_{n}&=n(x+n-1).
		\end{align}
	\end{subequations}
The above fraction generates  immediately  the first few values of $\mu_n(x)$ below (see \cite[Table~2]{AG21}).
\begin{align*}
\mu_2&=x\\
\mu_3&=x^{2}+x\\
\mu_4&=x^{3}+5 x^{2}+3 x\\
\mu_5&=x^{4}+11 x^{3}+21 x^{2}+11 x\\
\mu_6&=x^{5}+19 x^{4}+79 x^{3}+113 x^{2}+53 x\\
\mu_7&=x^{6}+29 x^{5}+211 x^{4}+589 x^{3}+715 x^{2}+309 x\\
\mu_8&=x^{7}+41 x^{6}+461 x^{5}+2141 x^{4}+4835 x^{3}+5235 x^{2}+2119 x\\
\mu_9&=x^{8}+55 x^{7}+883 x^{6}+6175 x^{5}+22357 x^{4}+43831 x^{3}+43507 x^{2}+16687 x
\end{align*}

Clearly, for $n\geq 2$ we have $d_{n,n-1}=1$ as
$\sigma=n 1 2 \cdots (n-1)$ is the only derangement $\sigma\in \D_n$ with  $\rlm(\sigma)= n-1$.  
The following recurrence  for $d_n:=|\D_n|$ is well-known, 
\begin{equation}\label{eq:d}
	d_n=(n-1)\left(d_{n-1}+d_{n-2}\right)\quad \textrm{with} \quad d_0=1,\quad d_1=0.
\end{equation}
This can be verified as in the following.
For $\sigma\in \D_n$ let $C=(n, \sigma(n), \sigma^2(n),\ldots, \sigma^{k}(n))$ be  the cycle containing $n$ with $k\geq 1$. 
If $k=1$, 
deleting the 2-cycle $C$  in $\sigma$ yields a derangement $\sigma'$  
of $[n-1]\setminus \{\sigma(n)\}$  
and there are $(n-1)d_{n-2}$ such derangements; 
if $k\geq 2$,  then replacing the cycle $C$ by $C^*=(\sigma(n), \sigma^2(n),\ldots, \sigma^{k}(n))$ in $\sigma$  yields  a derangement $\sigma^*$ of $[n-1]$ and 
there are $(n-1)d_{n-1}$ such derangements $\sigma$.

\begin{prop}
	For $n\geq 2$,
	let $\bar d_n=d_{n,1}$. 
	Then
	\begin{align}
		\bar d_n&=d_{n-1}+d_{n-2},\label{eq1}\\
		d_n&=(n-1)\bar d_{n},\label{eq2}\\
		\bar d_n&=(n-2)\bar d_{n-1}+(n-3)\bar d_{n-2} \qquad (n\geq 3)\label{eq3}. 
	\end{align}
\end{prop}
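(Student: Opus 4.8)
The plan is to reduce all three identities to a single combinatorial identification together with the derangement recurrence \eqref{eq:d}. First I would show that a derangement $\sigma\in\D_n$ satisfies $\rlm(\sigma)=1$ if and only if $\sigma(n)=1$, so that $\bar d_n=d_{n,1}=|D_{n,1}|$. Indeed, position $n$ is always a right-to-left minimum (nothing lies to its right), and the position carrying the value $1$ is always a right-to-left minimum (no smaller value exists); a single right-to-left minimum forces these two positions to coincide, i.e. $\sigma(n)=1$. Conversely, if $\sigma(n)=1$ then every earlier value exceeds $1=\sigma(n)$, so position $n$ is the unique right-to-left minimum. This converts the problem of understanding $\bar d_n$ into counting derangements with prescribed last value.

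Next I would prove \eqref{eq2} by a symmetry argument. For any $\pi\in\S_{n-1}$, extended by $\pi(n)=n$, the conjugation $\sigma\mapsto\pi\sigma\pi^{-1}$ is a bijection of $\D_n$ onto itself, since conjugation preserves the cycle type and hence the absence of fixed points. Moreover $(\pi\sigma\pi^{-1})(n)=\pi(\sigma(\pi^{-1}(n)))=\pi(\sigma(n))$, so this map sends a derangement with $\sigma(n)=j$ to one with last value $\pi(j)$, restricting to a bijection $D_{n,j}\to D_{n,\pi(j)}$. As $\S_{n-1}$ acts transitively on $[n-1]$, the cardinalities $|D_{n,j}|$ coincide for all $j\in[n-1]$. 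Since every derangement satisfies $\sigma(n)\in[n-1]$, summing gives $d_n=\sum_{j=1}^{n-1}|D_{n,j}|=(n-1)\,|D_{n,1}|=(n-1)\bar d_n$, which is \eqref{eq2}.

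Finally, \eqref{eq1} and \eqref{eq3} follow by elementary algebra. Dividing \eqref{eq2} by $n-1$ and invoking \eqref{eq:d} in the form $d_n=(n-1)(d_{n-1}+d_{n-2})$ yields $\bar d_n=d_{n-1}+d_{n-2}$, which is \eqref{eq1}. For \eqref{eq3}, I would substitute \eqref{eq1} on the right-hand side to get $(n-2)\bar d_{n-1}+(n-3)\bar d_{n-2}=(n-2)(d_{n-2}+d_{n-3})+(n-3)(d_{n-3}+d_{n-4})$, then recognize each summand through \eqref{eq:d} as $d_{n-1}=(n-2)(d_{n-2}+d_{n-3})$ and $d_{n-2}=(n-3)(d_{n-3}+d_{n-4})$, so the right-hand side collapses to $d_{n-1}+d_{n-2}=\bar d_n$ by \eqref{eq1}; the vanishing coefficient $(n-3)$ absorbs the boundary case $n=3$, where the formally undefined $d_{-1}$ would otherwise intrude. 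I do not expect a genuine obstacle: the only points requiring care are the identification $\bar d_n=|D_{n,1}|$ and the verification that conjugation is exactly the symmetry rendering the last value equidistributed over $[n-1]$, after which everything reduces to bookkeeping with the recurrence \eqref{eq:d}.
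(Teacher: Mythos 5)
Your proof is correct, but it runs the logical dependencies in the opposite direction from the paper's. The paper proves \eqref{eq1} combinatorially: for $\sigma\in\D_n$ with $\sigma(n)=1$ it splits into the case $\sigma(1)=n$, where restricting to $\{2,\dots,n-1\}$ leaves a derangement (counted by $d_{n-2}$), and the case $\sigma(1)\neq n$, where setting $\sigma'(i)=\sigma(i)$ for $i\neq n$ and $\sigma'(n)=\sigma(1)$ gives a derangement of $\{2,\dots,n\}$ (counted by $d_{n-1}$); it then obtains \eqref{eq2} algebraically from \eqref{eq1} and \eqref{eq:d}, and \eqref{eq3} from \eqref{eq1} and \eqref{eq2}. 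You instead prove \eqref{eq2} combinatorially, via conjugation by elements of $\S_{n-1}$ (extended to fix $n$), which shows that the last value of a derangement is equidistributed over $[n-1]$, and then recover \eqref{eq1} by dividing \eqref{eq:d} by $n-1$. Both deductions are sound, and each buys something: the paper's bijection assigns direct combinatorial meaning to the two summands $d_{n-1}$ and $d_{n-2}$, while your symmetry argument explains conceptually why $n-1$ divides $d_n$ and actually yields the stronger statement $|D_{n,j}|=d_n/(n-1)$ for every $j\in[n-1]$, not only $j=1$. Two smaller remarks: you make explicit the identification $\bar d_n=|D_{n,1}|$ (exactly one right-to-left minimum iff $\sigma(n)=1$), which the paper uses silently, and that is a genuine improvement in rigor; on the other hand, for \eqref{eq3} the paper's route---substituting $d_{n-1}=(n-2)\bar d_{n-1}$ and $d_{n-2}=(n-3)\bar d_{n-2}$, i.e.\ \eqref{eq2} at indices $n-1$ and $n-2$, directly into \eqref{eq1}---is slightly cleaner than your expansion down to $d_{n-3}$ and $d_{n-4}$, though your observation that the vanishing coefficient $n-3$ disposes of the boundary case $n=3$ handles the same issue correctly.
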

\begin{proof}
	Consider a $\sigma\in \D_n$ with  $\sigma(n)=1$.
	Then either $\sigma(1)=n$ or $\sigma(1)\neq n$. In the first case 
	the restriction of $\sigma$ on $\{2, \ldots, n-1\}$ is a derangement, in the second case define the permutation $\sigma'$ of $\{2, \ldots, n\}$  by
	$\sigma'(i)=\sigma(i)$ if $i\neq n$ and $\sigma'(n)=\sigma(1)$. Then 
	$\sigma'$ is a derangement  of $\{2, \ldots, n\}$. It follows that $\bar d_n=d_{n-1}+d_{n-2}$. Next we obtain \eqref{eq2} from \eqref{eq1} and \eqref{eq:d}. Finally we derive \eqref{eq3} from \eqref{eq1} and \eqref{eq2}.
\end{proof}
\begin{rem}
Eq. \eqref{eq3} was observed in \cite[Section 5.3]{AG21}, see \href{http://oeis.org/A000255}{A000255} \cite{Sl19}.
	As  $\sum_{n\geq 0}d_n \frac{x^n}{n!}=\frac{e^{-x}}{1-x}$,
	we derive from \eqref{eq1} the exponential generating function
\begin{align}\label{exp:gf}
		\sum_{n\geq 0} d_{n+2,1} \frac{x^n}{n!}&=\frac{e^{-x}}{(1-x)^2}.
	\end{align}
	By applying  the Stieltjes-Rogers addition formula to \eqref{exp:gf} one can derive the following J-fraction, which also follows from  \eqref{cf:D},
	\begin{align}\label{CF:dn1}
		\sum_{n\geq 0} d_{n+2,1} t^n=\cfrac{1}{1- t-\cfrac{2\, t^2}
			{1-3\, t-\cfrac{6\, t^2}{\cdots}}},
	\end{align}
	with $\gamma_0=1$, $\gamma_n=2n+1$ and $\beta_n=n(n+1)$ for $n\geq 1$.

\end{rem}

It was observed in \cite{AG21} that 
the sequence $d_{n, n-2}$ seems to be   \href{http://oeis.org/A028387}{A028387} in \cite{Sl19}. We provide a proof of this observation by manipulating the continued fraction \eqref{cf:D}, though there must be a simpler proof. 
\begin{prop}  We have $d_{3,1}=1$ and for $n\geq 3$,
	\begin{align}
		d_{n, n-2}&=(n-3)+(n-2)^2.\label{derbis}
	\end{align}
\end{prop}
\begin{proof}  In what follows
 we use  $[x^kt^n]\sum_{n\geq 0}a_n(x)t^n$ to denote the coefficient of 
 $x^kt^n$ in the formal series $\sum_{n\geq 0}a_n(x)t^n$ with $a_n(x)\in \R[x]$. 
 Hence $d_{n,n-2}=[x^{n-2}t^n]\sum_{n=0}^\infty \mu_n(x)t^n$ for $n\geq 3$.
 By \eqref{cf:D} we have 
	\begin{align*}
		d_{n,n-2}
		=&[x^{n-2}t^n]\cfrac{1}
		{1-\cfrac{x\, t^2}{1-(x+1)\, t-\cfrac{2(x+1)\, t^2}{1-
					(x+3)\, t}}}\\
		=&[x^{n-2}t^n]\frac{xt^2}{1-(x+1)t}+
		[x^{n-2}t^n]\frac{x^2t^4}{(1-(x+1)t)^2}\\
		&+
		[x^{n-3}t^{n-2}]\sum_{j\geq 1}\left(
		(x+1)\, t+\frac{2(x+1)\, t^2}{1-(x+3)\, t}\right)^j\\
		=&(n-2)+(n-3)+2\sum_{j=1}^{n-3}j
	\end{align*}
	which is equal to $(n-2)^2+n-3$. 
\end{proof}

\begin{rem}
	 The generating function is
	\begin{align}
	\sum_{n\geq 3} d_{n, n-2}x^{n-3}&=\frac{1+2x-x^2}{(1-x)^3}=1+5x+11x^2+19x^3+29 x^4+\ldots.\label{derbis:gf}
\end{align}	
	By \eqref{derbis} 
	we derive $
		d_{n+2, n}=d_{n+1, n-1}+2n$ $(n\geq 2)$
	It  should be possible
	to verify this recurrence directly. 
\end{rem}

\subsection{The polynomials $\mu_n(a)$'s as moments}
The Laguerre polynomials $L^{\alpha}_n(x)$ of degree $n$ is 
$$
L^{\alpha}_n(x)=\sum_{i=0}^n (-1)^i{n+\alpha\choose n-i}\frac{x^i}{i!}.
$$
The monic Laguerre polynomials $P^{\alpha}_n(x)=(-1)^n n!L_{n}^{\alpha}(x)$, whose weight function is $$x^\alpha e^{-x}, \quad x\geq 0,$$ 
may be  defined by 
the recurrence relation
\begin{equation}\label{recur:laguerre}
	P^{\alpha}_{n+1}(x)=(x-(\alpha+2n+1))
	P^{\alpha}_{n}(x)-n(n+\alpha)P^{\alpha}_{n-1}(x), \quad P^{\alpha}_{0}(x)=1,\quad P^{\alpha}_{-1}(x)=0.
\end{equation}
The \emph{associated Laguerre polynomials} 
$P^{\alpha}_{n}(x;c)$ are defined by the recurrence
\begin{equation}\label{recur:assolaguerre}
	P^{\alpha}_{n+1}(x;c)=(x-(\alpha+2n+2c+1))
	P^{\alpha}_{n}(x;c)-
	(n+c)(n+c+\alpha)P^{\alpha}_{n-1}(x;c)
\end{equation}
with $P^{\alpha}_{-1}(x;c)=0$ and 
$P^{\alpha}_{0}(x;c)=1$.
It is well known that the moments of Laguerre polynomials are the enumerative polynomials of permutations  by cycles or left-to-right minima. 
Kim and Stanton~\cite{KS15} described 
the moments of  $P^{\alpha}_{n}(x;c)$ using certain statistics on permutations and permutation tableaux. In the following we determine 
the orthogonal polynomials  whose 
moments are the enumerative polynomials  of derangments 
by right-to-left minima.
\begin{prop} 
The polynomials $p_n(x)$ orthogonal   with respect to the moment sequence 
	$\{\mu_n(a)\}$ are given  by 
	\begin{equation}\label{co-recur-laguerre}
		p_n(x)=(-1)^n n!L_n^{a-1}(x+1)+ (-1)^{n-1}(n-1)!(a-1)L_{n-1}^{a-1}(x+1;1),
	\end{equation}
	with $p_{-1}(x)=0$ and $p_{0}(x)=1$.
\end{prop}
\begin{proof}
	By \eqref{cf:Drlm} the orthogonal polynomials $P^*_n(x)$ with respect to the moments 
	$\{\mu_n(a)\}$ satisfy the recurrence, for $n\geq 1$,
	\begin{equation}\label{recur:lag*}
		P^*_{n+1}(x)=(x-(a+2n-1))P^*_{n}(x)-n(n+a-1)P^*_{n-1}(x) 
	\end{equation}
	with  $P^*_0(x)=1$, $P^*_1(x)=x$.
	By \eqref{recur:laguerre} and 
	\eqref{recur:assolaguerre} we verify straightforwardly that 
	the polynomials 
	\begin{align}\label{eq:poly}
	p_n(x):=
	P^{a-1}_{n}(x+1)+(a-1)P^{a-1}_{n-1}(x+1;1)
\end{align}
	 satisfy \eqref{recur:lag*} 
with  
	$$
	p_1(x)=(x+1-a)+(a-1)=x.
	$$
	This establishes \eqref{co-recur-laguerre}.
\end{proof}

\begin{rem}
One can also derive the above result from 
Chihara's theory  about the co-recursive  orthogonal polynomials~\cite{Ch57}, see also \cite{SS96}.
 The associated Laguerre polynomials have an explicit double sum formula, see \cite{KS15}.
\end{rem}

\section{Right-to-left minima and excedances in derangements}\label{type A}
We partition
$\D_n$ ($n\ge2$) according to the value of 
$n$ and the position of 1 in derangements, respectively, 
\begin{subequations}
\begin{align}
 \D_{n,j}&=\{\sigma\in \D_n: \sigma(n)=j\},\\
 \widetilde{\D}_{n,j}&=\{\sigma\in \D_n: \sigma(n+1-j)=1\}\qquad  (j\in [n-1]).
\end{align}
\end{subequations}
The following is the main theorem of  this section.
\begin{thm}\label{thm:pz}
For $n\ge2$ and $j\in [n-1]$, we have 
\begin{subequations}
\begin{align}\label{eq:pz1}
	\sum_{\sigma\in \D_{n,j}}(-1)^{\cyc (\sigma)}
	\biggl( \prod_{i\in \mathrm{RLMv}(\sigma)}x_i \biggr) 
	\biggl( \prod_{i\in \mathrm{EXCv}(\sigma)}y_i \biggr)  =-x_1\cdots x_j y_{j+1}\cdots y_n,
\end{align}
and
\begin{align}\label{eq:pz2}
	\sum_{\sigma\in \widetilde{\D}_{n,j}}(-1)^{\cyc (\sigma)}
	\biggl( \prod_{i\in \mathrm{RLMi}(\sigma)}x_i \biggr) 
	\biggl( \prod_{i\in \mathrm{EXCi}(\sigma)}y_i \biggr)  
	=- y_{1}\cdots y_{n-j}x_{n+1-j}\cdots x_n.
\end{align}
\end{subequations}
\end{thm}

We first give a  lemma which  reduces the proof of 
the above two identities \eqref{eq:pz1}
and \eqref{eq:pz2} by half. 
Note that the properties about $\mathrm{FIX}, \mathrm{EXC}$ and  $\mathrm{RLM}$ in the lemma appeared already  in  \cite[Lemma 21]{AG21}.

\begin{lem}\label{lem:key} The mapping 
$\zeta: \sigma\mapsto \rho\circ\sigma^{-1}\circ\rho$ on $\S_n$ with 
$\rho(i)=n+1-i$  for 
$i\in [n]$ is an involution on $\S_n$ such   that for $j\in [n]$,
\begin{align*}
\cyc(\sigma)&=\cyc(\zeta(\sigma)),\\
\sigma(n)=j&\Longleftrightarrow\zeta(\sigma)(n+1-j)=1,\\
j\in \mathrm{FIX}(\sigma)&\Longleftrightarrow \rho(j)\in \mathrm{FIX}(\zeta(\sigma)),\\
j\in \mathrm{EXCi}(\sigma)&\Longleftrightarrow \rho(j)\in \mathrm{EXCv}(\zeta(\sigma)),\\
j\in \mathrm{RLMi}(\sigma)&\Longleftrightarrow \rho(j)\in \mathrm{RLMv}(\zeta(\sigma)).
\end{align*}
Moreover,  
the restriction $\zeta:\D_{n, j}\to \widetilde{\D}_{n,j}$ 
(resp. $\zeta:\widetilde{\D}_{n,j}\to \D_{n, j}$)  is a bijection  for $j\in [n-1]$.  
\end{lem}
\begin{proof}
 For $\sigma\in \S_n$,   the two
permutations $\sigma$ and $\zeta(\sigma)$ are  conjugate  in $\S_n$,  so  they have the same cycle type and a fortiori  $\cyc(\sigma)=\cyc(\zeta(\sigma))$. Next, 
\begin{align*}
\zeta(\sigma)(n+1-j)=1
\Longleftrightarrow \sigma^{-1}(j)=n
\Longleftrightarrow \sigma(n)=j;
\end{align*}
\begin{align*}
\rho(j)\in \mathrm{FIX}(\zeta(\sigma))\Longleftrightarrow \zeta(\sigma)(\rho(j))=\rho(j)
\Longleftrightarrow \rho\circ \sigma^{-1}(j)=\rho(j)
\Longleftrightarrow \sigma(j)=j.
\end{align*}
For the fourth and fifth properties, as $(\zeta(\sigma))^{-1}(\rho(j))=\rho\circ \sigma(j)$, we have
\begin{align*}
\rho(j)\in \mathrm{EXCv}(\zeta(\sigma)))
\Longleftrightarrow \rho(j)>\rho(\sigma(j))\Longleftrightarrow j<\sigma(j)\Longleftrightarrow
j\in \mathrm{EXCi}(\sigma);
\end{align*}
\begin{align*}
j\in \mathrm{RLMi}(\sigma)&\Longleftrightarrow
\sigma(j)<\sigma(k) \quad \textrm{for} \quad k>j\\
&\Longleftrightarrow (\zeta(\sigma))^{-1}(\rho(j))>(\zeta(\sigma))^{-1}(\rho(k))\quad \textrm{for} \quad \rho(k)<\rho(j)\\
&\Longleftrightarrow 
 \rho(j)\in \mathrm{RLMv}(\zeta(\sigma)).
\end{align*}
The last property follows from the second and third properties.
\end{proof}

For convenience,
we  illustrate the  properties of mapping $\zeta$ by an  example.
\begin{example}
If $\sigma=(1,3,6,8,4)(5,9)(2)(7)$, then
 $\sigma^{-1}=(1, 4,8, 6,3)(5,9)(2)(7)$ and 
 $\zeta(\sigma)=(9,6, 2,4, 7)(5,1)(8)(3)$. Hence
 $$
 \sigma=326198745\in  \D_{9,5},\quad \zeta(\sigma)=543712986\in \widetilde{\D}_{9,5}.
 $$
 and 
 \begin{align*}
\mathrm{RLMi}(\sigma)&=\{4,8,9\},\qquad 
\mathrm{RLMv}(\zeta(\sigma))=\{1,2,6\},\\
\mathrm{EXCi}(\sigma)&=\{1,3,5,6\},\qquad 
\mathrm{EXCv}(\zeta(\sigma))=\{4,5,7,9\},\\
\mathrm{FIX}(\sigma)&=\{2,7\},\hspace{2cm}
  \mathrm{FIX}(\zeta(\sigma))=\{3,8\}. 
  \end{align*}
\end{example}

\begin{lem}\label{lem:equivalence}
The two identities \eqref{eq:pz1} and \eqref{eq:pz2} are equivalent.
\end{lem}
\begin{proof}
For $i\in [n]$, the mapping 
$\rho: i\mapsto n+1-i$ is a permutation of $[n]$. 
Now, making substitution 
$x_i\mapsto x_{\rho(i)}$ and $y_i\mapsto y_{\rho(i)}$ for $i\in [n]$ in \eqref{eq:pz2}, we obtain
\begin{align}\label{eq:rho}
\sum_{\sigma\in \widetilde{\D}_{n,j}}(-1)^{\cyc (\sigma)}
	\biggl( \prod_{i\in \mathrm{RLMi}(\sigma)}
	x_{\rho(i)} \biggr) 
	\biggl( \prod_{i\in \mathrm{EXCi}(\sigma)}y_{\rho(i)} \biggr)  =-x_1\cdots x_j y_{j+1}\cdots y_n.
\end{align}
By Lemma~\ref{lem:key},  the restriction of $\zeta$
on $\widetilde{\D}_{n,j}$ sets up a bijection from  
$\widetilde{\D}_{n,j}$ to $\D_{n,j}$ such that
\begin{align*}
j\in \mathrm{EXCi}(\sigma)&\Longleftrightarrow \rho(j)\in \mathrm{EXCv}(\zeta(\sigma)),\\
j\in \mathrm{RLMi}(\sigma)&\Longleftrightarrow \rho(j)\in \mathrm{RLMv}(\zeta(\sigma)).
\end{align*}
Therefore, letting  $\zeta(\sigma)=\tau$ for $\sigma\in \widetilde{\D}_{n,j}$,  eq.  \eqref{eq:rho} is equivalent to
$$
\sum_{\tau\in \D_{n,j}}(-1)^{\cyc (\tau)}
	\biggl( \prod_{i\in \mathrm{RLMv}(\tau)}
	x_{i} \biggr) 
	\biggl( \prod_{i\in \mathrm{EXCv}(\tau)}y_{i} \biggr)=-x_1\cdots x_j y_{j+1}\cdots y_n,
$$
which is exactly \eqref{eq:pz1}.
\end{proof}

For any derangement  $\sigma \in\D_{n}$, as 
 $\mathrm{EXCi(\sigma)}=[n]\setminus\mathrm{EXCv(\sigma^{-1})}$, 
 we derive from \eqref{eq:pz1} the following result.
\begin{cor}\label{conj1}
Let $\widehat{\D}_{n,j}=\{\sigma\in \D_n: \sigma(j)=n\}$ for $j\in [n-1]$. Then 
\begin{align}\label{eq:refineSi}
\sum_{\sigma\in \widehat{\D}_{n,j}}(-1)^{\cyc (\sigma)}
 \prod_{i\in \mathrm{EXCi}(\sigma)}x_i   
=-x_1\cdots x_j.
\end{align}
\end{cor}
Note that multiplying \eqref{eq:refineSi} by $t^j$ and  summing over $j\in [n-1]$ yields \cite[Theorem~7]{Si16}.

\subsection{Proof of Theorem~\ref{thm:pz}}
Thanks to Lemma~\ref{lem:equivalence} we only need to prove \eqref{eq:pz1}.
For $j\in \{1, 2, \ldots, n-1\}$ with $n\geq 3$, we divide $\D_{n,j}$ into two parts:
\begin{subequations}
\begin{align}\label{Udef}
U_{n,j}=\begin{cases}
\{(1,2, \ldots, n-1, n)\}& \textrm{if $j=1$},\\
\{\sigma\in\D_{n,j}\,|\,\sigma(1)\neq2,\, \sigma(2)=1\}
& \textrm{if $j\geq 2$},
\end{cases} 
\end{align}
and 
\begin{align}\label{Edef}
\overline{U}_{n,j}=\D_{n,j} \setminus U_{n,j}
\quad (1\leq j\leq n-1).
\end{align}
\end{subequations}

Let $\x=(x_k)_{k\geq 1}$ and $\y=(y_k)_{k\geq 1}$ be two sequences of variables.
For $\sigma\in\D_{n}$ define the weight
\begin{align}\label{weight:v}
 w(\sigma;\, {\x},{\y})=(-1)^{\cyc(\sigma)}
 \prod_{i\in \mathrm{RLMv}(\sigma)}x_i \prod_{i\in \mathrm{EXCv}(\sigma)}y_i.
\end{align}

\begin{lem}\label{lemma:key}
For $\sigma\in  \overline{U}_{n,j}$ with $j\in [n-1]$, let 
$\varphi(\sigma)=(\sigma(i_{\sigma}), \sigma(i_{\sigma}+1))\circ\sigma$, where $i_\sigma$ is  the smallest integer $i$ such that $\sigma(i)\neq i+1$.
Then $\varphi$ 
is a {\it weight-preserving and  sign-reversing} (\textsc{wpsr}) involution  on $\overline{U}_{n,j}$, i.e., 
$w(\sigma; \,\x,{\y})=-w(\varphi(\sigma);\,\x,\y)$.
\end{lem}
\begin{proof} Let $\sigma\in  \overline{U}_{n,j}$. Obviously permutations 
$\varphi(\sigma)$ and $\sigma$ have opposite  signs. 
To show  that $\varphi$ is an involution on $ \overline{U}_{n,j}$, 
we verify the following three  points.
\begin{itemize}
\item $\varphi(\sigma)\in \D_{n,j}$. If $j=1$, as 
 $\sigma\neq (1, 2, \ldots, n-1, n)$, there is an integer $i\in [n-2]$ such that $\sigma(i)\neq i+1$, so $1\leq i_\sigma\leq n-2$;
 if $j>1$, as $\sigma(n)=j$, we have  $\sigma(j-1)\neq j$, so $1\le i_\sigma \le j-1$. In any case, we have $n>i_\sigma+1$, so $\varphi(\sigma)(n)=\sigma(n)=j$. Besides  $\{\sigma(i_\sigma), \sigma(i_\sigma+1)\}\cap \{i_\sigma,i_\sigma+1 \}=\emptyset$.
Indeed, as $\sigma$ is a derangement, we have $\sigma(i)\neq i$ for any $i\in [n]$; 
if $i_\sigma=1$, then $\sigma(1)\neq2$ and $\sigma(2)\neq1$ because $\sigma\in \overline{U}_{n,j}$;
if $i_\sigma>1$, then  $\sigma(i_\sigma)\neq i_\sigma+1$ and $\sigma(i_\sigma+1)\neq i_\sigma$ because $\sigma (i_\sigma-1)=i_\sigma$. 
Hence, by (1) we have  
$\varphi(\sigma)\in\D_{n,j}$.
\item  $\varphi(\sigma)\notin U_{n,j}$. If $j=1$, then  $i_\sigma<n-1$ and 
$\varphi(\sigma)(i_\sigma)=\sigma(i_\sigma+1)\neq i_\sigma+1$.  Hence $\varphi(\sigma)\notin U_{n,1}$; 
If $j>1$ and 
$i_\sigma=1$, then $\varphi(\sigma)(2)=\sigma(1)\neq 1$,
if  $j>1$ and  $i_\sigma>1$, then 
$\sigma(1)\notin \{\sigma(i_\sigma), \sigma(i_\sigma+1)\}$,
so $\varphi(\sigma)(1)=\sigma(1)=2$ because $1<i_\sigma$. Hence  $\varphi(\sigma)\notin U_{n,j}$ for $j>1$.
\item $\varphi^2(\sigma)=\sigma$.
If  $i<i_\sigma$, then $\varphi(\sigma)(i)=\sigma(i)=i+1$ and $\varphi(\sigma)(i_\sigma)=\sigma(i_\sigma+1)\neq i_\sigma+1$.
Thus $i_\sigma=i_{\varphi(\sigma)}$ and 
$(\varphi(\sigma)(i_\sigma), \varphi(\sigma)(i_\sigma+1))=(\sigma(i_\sigma+1), \sigma(i_\sigma))$.
So $\varphi^2(\sigma)=\sigma$.
\end{itemize}
Next, we  show  that $\varphi$ is weight-preserving by  verifying the following two  points.
\begin{itemize}
\item $\mathrm{RLMv}(\sigma)=\mathrm{RLMv}(\varphi(\sigma))$:
If $j=1$, then  $\mathrm{RLMv(\sigma)}=\mathrm{RLMv(\varphi(\sigma))}=\{1\}$. So we consider the case $j>1$ in the following.
\begin{enumerate}
\item If $\sigma(i_\sigma)=1$, then the first $i_\sigma$ letters of $\sigma$ are $2, 3, \ldots, i_\sigma, 1$, so 
$\sigma(i_\sigma+1)>i_\sigma+1$ and there must exist an integer $k$ satisfied $k>i_\sigma+1$ and $\sigma(k)=i_\sigma+1$, thus  $\sigma(i_\sigma+1)\notin\mathrm{RLMv}(\sigma)$.
We have $\varphi(\sigma)(i_\sigma+1)=1\in\mathrm{RLMv}(\varphi(\sigma))$ and $\varphi(\sigma)(i_\sigma)=\sigma(i_\sigma+1)\notin\mathrm{RLMv}(\varphi(\sigma))$.
\item If $\sigma(i_\sigma)>1$ and  $\sigma(i_\sigma+1)=1\in\mathrm{RLMv(\sigma)}$,
there is  an integer  
$k>i_\sigma+1$ such that  $\sigma(k)=i_\sigma+1$, so  $\sigma(i_\sigma)\notin \mathrm{RLMv}(\sigma)$,
$\varphi(\sigma)(i_\sigma+1)=\sigma(i_\sigma)\notin\mathrm{RLMv}(\varphi(\sigma))$ and  $\varphi(\sigma)(i_\sigma)=1\in\mathrm{RLMv(\sigma)}$.
\item
If $\sigma(i_\sigma)>1$ and $\sigma(i_\sigma+1)>1$, then 
$1$ appears at the right of the letter  $\sigma(i_\sigma+1)$ 
in $\sigma$, hence  $\varphi(\sigma)(i_\sigma+1)\notin\mathrm{RLMv(\varphi(\sigma))}$.
\end{enumerate}

\item  $\mathrm{EXCv}(\sigma)=\mathrm{EXCv}(\varphi(\sigma))$:
As $\sigma(i_\sigma)\neq i_\sigma, i_\sigma+1$, it is clear that $\sigma(i_\sigma)>i_\sigma$ if and only if 
$\varphi(\sigma)(i_\sigma+1)=\sigma(i_\sigma)>i_\sigma+1$.
Similarly, we see that $\sigma(i_\sigma+1)>
i_\sigma+1$ if and only if 
$\varphi(\sigma)(i_\sigma)=\sigma(i_\sigma+1)>i_\sigma$. Hence
$\mathrm{EXCv(\sigma)}=\mathrm{EXCv(\varphi(\sigma))}$.
\end{itemize}
Summarizing we conclude that  $\varphi$ 
is the mapping we wished.
\end{proof}

Now, we are ready to complete  the proof of 
Theorem \ref{thm:pz}.
\begin{proof} [Proof of Theorem \ref{thm:pz}]
By Lemma~\ref{lemma:key}, for $j\in [n-1]$  we have 
\begin{align*}
P_{n,j}(\mathbf{x}, \mathbf{y})&:=\sum_{\sigma\in \D_{n,j}}w(\sigma;\,\mathbf{x}, \mathbf{y})=
\sum_{\sigma\in U_{n,j}}w(\sigma;\,{\x},{\y}).
\end{align*}
By \eqref{Udef}, the only element in $U_{n,1}$ is  $\sigma=2\,3\,\ldots n\,1$,  therefore 
\begin{align}\label{eq:j1}
P_{n,1}(\mathbf{x}, \mathbf{y})
=-x_1y_2\ldots y_n. 
\end{align}
 For $\sigma\in\D_{n}$ and $m\in \N$ define the  shifted version of \eqref{weight:v} by
\begin{equation}
w(\sigma; \delta^m({\x}),\delta^m({\y}))
:=(-1)^{\cyc(\sigma)}\prod_{i\in \mathrm{RLMv}(\sigma)}x_{i+m}
\prod_{i\in \mathrm{EXCv}(\sigma)}y_{i+m}.
\end{equation}
  If  $\sigma\in U_{n,j}$ with  $j>1$, then   $\sigma(2)=1$,  
let  $\psi(\sigma)=\sigma'(1)\sigma'(3)\ldots \sigma'(n)$ with $\sigma'(i)=\sigma(i)-1$ for
$i\in \{1, 3, \ldots, n\}$. 
It is easy to see that 
the  mapping $\psi:U_{n,j}\to \D_{n-1,j-1}$ is a bijection such  that
$w(\sigma;\, \x,\y)=x_1w(\sigma';\,\delta(\x),\delta(\y))$, thus
$$
P_{n,j}(\x,\y)=
x_1P_{n-1,j-1}(\delta(\x), \delta(\y))
$$
and by iterating, we obtain the equation
$$
P_{n,j}(\x,\y)=
x_1\ldots x_{j-1}P_{n-j+1,1}(\delta^{j-1}(\x), \delta^{j-1}(\y))
$$
from which follows  \eqref{thm:pz1}  by invoking \eqref{eq:j1}.
\end{proof}

\section{Identities  for permutations of type B}\label{type B}
Let $B_n$ be the set of permutations $\sigma$ of $[n]\cup 
\{-n,\ldots, -1\}$ such that $\sigma(-i)=-\sigma(i)$ for each $i\in [n]$. 
As usual, we denote the negative element $-i$ by $\overline{i}$, 
identify $\sigma$ with the word $\sigma=\sigma(1)\cdots\sigma(n)$ and 
write $|\sigma|:=|\sigma(1)|\ldots |\sigma(n)|$ the associated permutation of $[n]$.  We follow  Brenti~\cite{Br94} for definition of  permutation statistics of  type $B$.
The \emph{cycle decomposition} of $\sigma\in B_n$ is accomplished by first writing $|\sigma|$ as the disjoint union of cycles,
then turning each element $|\sigma(i)|$ to $\sigma(i)$.
Let $\cyc(\sigma)$ be the number of cycles of $|\sigma|$. 
For example, if $\sigma=\bar{6}42\bar{3}158\bar{7}$, then 
$|\sigma|=(1,\,{6},\,5)({3},\,2,\,4)({7},\,8)$ and 
 $\sigma=(1,\,\bar{6},\,5)(\bar{3},\,2,\,4)(\bar{7},\,8)$, hence $\cyc(\sigma)=3$.

\subsection{Excedance of type B}
If $\sigma\in B_n$, 
 the index 
$|\sigma(i)|$ with  $i\in [n]$ is a type \emph{B excedance} of $\sigma$
if $\sigma(|\sigma(i)|)=-|\sigma(i)|$ or $\sigma(|\sigma(i)|)>\sigma(i)$, and
the index $|\sigma(i)|$ is a type B \emph{anti-excedance} of $\sigma$
if $\sigma(|\sigma(i)|)=|\sigma(i)|$ or $ \sigma(|\sigma(i)|)<\sigma(i) $. Let
\begin{align*}
	\mathrm{EXC_B}(\sigma)&=\{|\sigma(i)|: \sigma(|\sigma(i)|)=-|\sigma(i)| \text{ or } \sigma(|\sigma(i)|)>\sigma(i), \; i\in [n]\},\\
	\mathrm{AnEXC_B}(\sigma)&=\{|\sigma(i)|:
	\sigma(|\sigma(i)|)=|\sigma(i)| \text{ or }
	\sigma(|\sigma(i)|)<\sigma(i), \; i\in [n]\}.
\end{align*}
For example, if $\sigma=\bar{6}42\bar{3}158\bar{7}\bar{9}=(1,\,\bar{6},\,5)(\bar{3},\,2,\,4)(\bar{7},\,8)(\bar 9)$, 
then 
\begin{align*}
\mathrm{EXC_B}(\sigma)&=\{1,2,7,9\},\\
\mathrm{AnEXC_B}(\sigma)&=\{3,4,5,6,8\}.
\end{align*}
\begin{rem} 
The above definition of excedance  is given by Brenti~\cite{Br94}, see another notion of excedance of type B in \cite{St94}.
\end{rem}
If $\sigma\in B_n$ 
let
${\rm Neg}(\sigma)=\{i\in [n] : \sigma(i)<0 \}$, and define the statistics
$$\neg(\sigma)=\#{\rm Neg}(\sigma), \quad
\nsum(\sigma)=\sum_{i\in {\rm Neg}(\sigma)}|\sigma(i)|.$$
We define the enumerative polynomial of a subset $T\subseteq B_n$ by
$$P_{T}(\x,s, t)=\sum_{\sigma\in T}w_{\sigma}(\x,s,t)
$$
with 
$$w_{\sigma}(\x,s, t)=(-1)^{\cyc(\sigma)}s^{\neg(\sigma)}t^{{\rm nsum}(\sigma)}\prod_{i\in\mathrm{EXC_B(\sigma)}}x_i.$$

Alexandersson and Getachew~\cite[Proposition 29]{AG21}
 proved the identity:
\begin{align}\label{eq:ags1}
	\sum_{\sigma\in \S_n}(-1)^{\cyc(\sigma)} \prod_{i\in \mathrm{EXCi}(\sigma)}x_i=
	-\prod^{n-1}_{j=1}(x_j-1).
\end{align}
The special $x_1=\cdots =x_n=x$ case of this
identity is well known, see \cite{Br00, KZ03} and 
 a type B analogue
of the latter case was given by 
Zhao \cite[Theorem 3]{Zh13}  as follows:
\begin{equation}\label{eq:zhao}
	P_{B_n}(\mathbf{x},1,1)=
	\begin{cases}
		-(x+1)(x-1)^{n-1} &\text{if $ n$ is odd};\\
		(x-1)^n & \text{if $n$ is even}.
	\end{cases}
\end{equation}
We can  unify  \eqref{eq:ags1} and \eqref{eq:zhao} by the  following theorem.
\begin{thm}\label{eq:Bexc} 
	Let $n\ge 1$, then we have
	\begin{align}\label{EXC}
		P_{B_n}(\x, s, t)=-\biggl(1+(-1)^{n-1}x_ns^nt^{\frac{n(n+1)}{2}}\biggr)\prod^{n-1}_{j=1}(x_j-1).
	\end{align}
\end{thm}

In order  to prove this identity we divide the set $B_n$ into three subsets:
\begin{align*}
	B^+_n&=\{\sigma\in B_n: \sigma(i)>0 \text{ for all } i\in [n]\},\\
	B^-_n&=\{\sigma\in B_n: \sigma(i)<0 \text{ for all } i\in [n]\},\\
	B^{\pm}_n&=\{\sigma\in B_n: \sigma(i)\sigma(j)<0 \text{ for some } i,j\in [n]\}
\end{align*}
and compute their enumerative polynomials respectively.

Clearly the subset $B^+_n$ is $\S_n$,  
so the corresponding identity is \eqref{eq:ags1}. 
We give an alternative  proof of \eqref{eq:ags1} by adapting the proof in \cite{KZ03} for a special case.

\begin{lem}[\cite{AG21}]\label{lem4}
	Let $n\ge1$, then
	\begin{align}\label{eq:lem4}
		P_{B^+_n}(\x,s,t)=-\prod^{n-1}_{j=1}(x_j-1).
	\end{align}
\end{lem}

\begin{proof}
Obviously the identity is true for $n=1, 2$.
Assume $n\ge 3$ and  divide $B^+_n$ into three subsets:
\begin{align*}
		B'_{n}&=\{\sigma\in B^{+}_n:\sigma(n-1)\neq n\text{ and } \sigma(n)\neq n \},\\
		B''_n&=\{\sigma\in B^{+}_n:\sigma(n)=n \},\\
		B'''_n&=\{\sigma\in B^{+}_n:\sigma(n-1)=n\}.
\end{align*}
It is easy to verify that the mapping 
$\varphi_1: \sigma \mapsto \sigma'=(\sigma(n-1),\sigma(n))\circ\sigma$ is a \textsc{wpsr} involution on $B'_{n}$, 
i.e., $w_\sigma(\x,s,t)=-w_{\sigma'}(\x,s,t)$;
the mapping $\varphi_2:\sigma\mapsto\sigma'=\sigma(1)\cdots\sigma(n-1)$ is 
a bijection from $B''_n$ to $B^+_{n-1}$ such that $w_\sigma(\x,s,t)=-w_{\sigma'}(\x,s,t)$, and the mapping
$\varphi_2\circ \varphi_1: \sigma\mapsto\sigma'$ is a bijection from $B'''_n$ to $B^+_{n-1}$ such that $w_\sigma(\x,s,t)=x_{n-1}w_{\sigma'}(\x,s,t)$.
Combining the three cases, 
 we have 
$$P_{B_n^+}(\x,s,t)=(x_{n-1}-1)P_{B_{n-1}^+}(\x,s,t).$$
As $ P_{B_2^+}(\x,s,t)=1-x_1 $, 
eq.~\eqref{eq:lem4}  follows by iterating the above recurrence.
\end{proof}

\begin{lem}\label{lem5}
	Let $n\ge1$, then
	\begin{align}\label{eq:lem5}
		P_{B_n^-}(\x,s,t)=
		(-1)^ns^nt^{n(n+1)/2}x_n\prod^{n-1}_{j=1}(x_j-1).
	\end{align}
\end{lem}

\begin{proof} 
	As $P_{B_n^-}(\x,s,t)=s^nt^{n(n+1)/2}P_{B_n^-}(\x,1,1)$, we assume that $s=t=1$ in the following. For $\sigma\in B^+_n$, the mapping 
	$\varphi: \sigma\to \bar\sigma$, where
	$\bar \sigma$ is the permutation defined by
	$\bar \sigma(i):=-\sigma(i)$ for $i\in [n]$, is a bijection from $B_n^+$ to $B_n^-$.
	It is clear that 
	$i\in \mathrm{AnEXC}_B(\sigma)$ if and only if $i\in \mathrm{EXC}_B(\bar \sigma)$.
	For $n\ge 1$, we have 
	\begin{align*}
		P_{B_n^-}(\x,1,1)&= \sum_{\sigma\in B^-_n}(-1)^{\cyc(\sigma)}\prod_{i\in \mathrm{EXC_B(\sigma)}}x_i\\
		&=\sum_{\sigma\in B^+_n }(-1)^{\cyc(\sigma)}\prod_{i\in \mathrm{AnEXC_B(\sigma)}}x_i\\
		&=x_1\cdots x_n\cdot\sum_{\sigma\in B^+_n}(-1)^{\cyc(\sigma)}\prod_{i\in\mathrm{EXC_B(\sigma)}}x_i^{-1}
	\end{align*}
	from which eq.~\eqref{eq:lem5}  follows from \eqref{eq:lem4}.
\end{proof}

%
For $a=n$ or $\bar n$, $k\in [n]$  and $\sigma\in B_{n-1}$ 
we define 
the permutation $\sigma'=\Psi(a,k, \sigma)$ in $B_n$ by 
 $
\sigma'(k)=a$, $ \sigma'(n)=\sigma(k)$, $ \sigma'(i)=\sigma(i)$
 for $i\in [n-1]\setminus\{ k\}$.
 Note that if $k=|a|=n$  the operation amounts to add the cycle 
 $(a)$ to $\sigma$.
 The mapping $\Psi$ will be  called the \emph{insertion operation} on $B_{n-1}$ in what follows. 
 For a permutation $\sigma\in B_n$, let
$\mathrm{Im}(\sigma)=\{\sigma(i): i\in [n]\}$.

%


\begin{lem}\label{lem6}
	Let $n\ge1$, then
	\begin{align}\label{eq:lem6}
		P_{B_n^\pm}(\x,s,t)=0.
	\end{align}
\end{lem}
\begin{proof}
By induction on $n\geq 1$,
we construct a \textsc{wpsr}  involution without fixed point on 
$B_n^\pm$.  For convenience, we shall describe such an involution by matching all the elements of $B_n^\pm$ two by two $\{\sigma, \sigma'\}$ such that  
$w_\sigma(\x,s,t)=-w_{\sigma'}(\x,s,t)$. 
 
Obviously $B_1^\pm=\emptyset$ and  $\{\{\bar{1}2, 2\bar{1}\}, \; \{1\bar{2}, \overline{2}1\}\}$ is a
\textsc{wpsr}  perfect matching of $B_2^\pm$, i.e.,
\begin{align*}
w_{\bar{1}2}(\x,s,t)+w_{2\bar{1}}(\x,s,t)&= x_1st-x_1st  =0,\\ 
w_{1\bar{2}}(\x,s,t)+w_{\overline{2}1}(\x,s,t)&= x_2st^2-x_2st^2=0.
\end{align*}
Hence $P_{B_2^\pm}(\x,s,t)=0$. Note that $\mathrm{Im}(\sigma)=\mathrm{Im}(\sigma')$.

Let $n\geq 3$.  It is easy to see that 
any $ \sigma\in B^{\pm}_n$ can be obtained 
by applying the insertion operation  $\Psi(a,k, \tau)$ with 
$a=n$ or $\bar n$,  $k\in [n]$  and $\tau\in B_{n-1}$. More precisely, there are four cases:
\begin{enumerate}
\item $a=\bar{n}$ and $\tau\in B^+_{n-1}$,  ($1'$) $a=n$ and  $\tau\in B^-_{n-1}$;
\item $a=\bar{n}$ and $\tau\in B^{\pm}_{n-1}$,  ($2'$) $a=n$ and $\tau\in B^{\pm}_{n-1}$.
\end{enumerate}

It suffices to show that the sum of weights of each of the four cases is zero. 
We just consider the cases (1) and (2) since the other two cases are similar.

\begin{enumerate}
\item	
Let $C_n$  be the set of permutations obtained by procedure (1). 
We show that there is such a \textsc{wpsr} perfect matching on $C_n$.
For  any $\sigma\in C_n$ let 
	$\sigma'=(\sigma(n-1),\sigma(n))\circ\sigma$. 
	It is easy to check that $\{\{\sigma,\sigma'\}\mid \sigma\in C_n\}$   is a \textsc{wpsr} perfect matching of $C_n$.
\item 		
Let $C'_n$ be the sets of permutations obtained by 
procedure (2). 
We show that there are such \textsc{wpsr} perfect matching on $C_n$ and $C'_n$.
Assume that there exists a \textsc{wpsr} perfect matching $M_{n-1}$  of $B^{\pm}_{n-1}$ and $(\tau,\tau')\in M_{n-1}$ is a doubleton such that 
$\mathrm{Im}(\tau)=\mathrm{Im}(\tau')$.
We claim that applying the insertion operation
 $\Phi$ on $C'_{n-1}$ yields a perfect matching of $C_n'$, where $k\in [n]$.
	
\begin{itemize}
\item If $k=n$, the operation amounts to add the cycle $(\overline{n})$ to $\tau$ and $\tau'$. 
So $\cyc(\sigma)=\cyc(\tau)+1$, $\cyc(\sigma')=\cyc(\tau')+1$, $\mathrm{EXC}_B(\sigma)=\mathrm{EXC}_B(\tau)\cup\left\{n\right\}$
and $\mathrm{EXC}_B(\sigma')=\mathrm{EXC}(\tau')\cup\left\{n \right\}$.\\

\item If $k\neq n$, it is equivalent to insert $\overline{n}$ in cycles $c=(\cdots \epsilon, \tau(k)\cdots)$ and $c'=(\cdots \epsilon, \tau'(k)\cdots)$ of $\tau$ and $\tau'$ as image of $\epsilon =k$ or $\bar k$, 
then we obtain the corresponding cycles
$(\cdots \epsilon, \overline{n}, \tau(k)\cdots)$ and $(\cdots \epsilon, \overline{n}, \tau'(k)\cdots)$
in $\sigma$ and $\sigma'$. 
Thus $\cyc(\sigma)=\cyc(\tau)$ and $\cyc(\sigma')=\cyc(\tau')$.
If $k\in\mathrm{EXC_B}(\tau)$, then 
$\mathrm{EXC}_B(\sigma)=\mathrm{EXC_B}(\sigma')=\mathrm{EXC}_B(\tau)\cup\{n\}\setminus\{k\}$.
If $k\notin\mathrm{EXC_B}(\tau)$, 
then $\mathrm{EXC}_B(\sigma)=\mathrm{EXC}_B(\sigma')=\mathrm{EXC}_B(\tau)\cup\{n\}$.
\end{itemize} 
As $\mathrm{Im}(\tau)=\mathrm{Im}(\tau')$ we have a \textsc{wpsr} perfect matching $M_n$ on $C'_n$.
\end{enumerate}

Combining  the above two cases  and the initial conditions, we derive \eqref{eq:lem6}.
\end{proof}

\begin{example}
Inserting $\overline{3}$ into matchings 
$\{\bar{1}2, 2\bar{1}\}$ and  $
\{1\bar{2}, \overline{2}1\}$
results 
\begin{equation*}
	\begin{array}{lr}
		w_{\bar{1}2\bar{3}}(\x,s,t)+w_{2\overline{1}\overline{3}}(\x,s,t)=-x_1x_3s^2t^4+x_1x_3s^2t^4=0,&\\
		w_{\overline{1}\overline{3}2}(\x,s,t)+w_{2\overline{3}\overline{1}}(\x,s,t)=x_1x_3s^2t^4-x_1x_3s^2t^4=0,&\\
		w_{\overline{3}\overline{1}2}(\x,s,t)+w_{\overline{3}2\overline{1}}(\x,s,t)=-x_3s^2t^4+x_3s^2t^4=0.&\
	\end{array}
\end{equation*}
\end{example}

%
\begin{proof}[Proof of Theorem~\ref{eq:Bexc}]
Clearly,  Theorem~\ref{eq:Bexc} follows from
Lemmas \ref{lem4} through \ref{lem6}. 
\end{proof}

\subsection{Right-to-left minima of type B}

Petersen \cite{Pe11} defined the set of \emph{right-to-left-minimum values} of $\sigma\in B_n$ by
\begin{align*}
	\mathrm{RLM_B}(\sigma)=\left\{ \sigma(i):0<\sigma(i)<|\sigma(j)| \text{ for all } j>i \right\}.
\end{align*}
Consider the enumerative polynomial
$$
Q_{B_n}(\y,s,t):=\sum_{\sigma\in B_n}w_{\sigma}(\y,s,t)
$$
where 
$$
w_{\sigma}(\y,s, t)=(-1)^{\cyc(\sigma)}s^{\rm neg(\sigma)}t^{{\rm nsum}(\sigma)}\prod_{j\in\mathrm{RLM_B(\sigma)}}y_j.
$$
The following is a right-to-left minima analogue of Theorem~\ref{eq:Bexc}.
\begin{thm}\label{thm:rlmB}
	For $n\ge1$,  we have
	\begin{align}\label{eq:RLM_B}
		Q_{B_n}(\y,s,t)=(-1)^n\prod_{i\in [n];\,  i\; \text{even}}(y_i-1)\prod_{i \in [n];\, i \;\text{odd}}(y_i+st^i).
	\end{align}
\end{thm}
\begin{proof}
The formula is obvious  for $n=1$, i.e.,  $Q_{B_1}(\y,s,t)=-(y_1+st)$.
For $n\ge2$, partition $B_n$ in two subsets,
\begin{align*}
	B'_n&=\{\sigma\in B_n:|\sigma(n)|=n \},\\
	B''_n&=\{\sigma\in B_n:|\sigma(n)|\neq n  \}.
\end{align*}

For $\sigma\in B'_n$, 
let  $\sigma'$ be the restriction of $\sigma$ on $[n-1]$.  
Then $\phi_1: \sigma\mapsto \sigma'$ is a bijection 
 from $B'_n$ to $\{n, \bar n\}\times B_{n-1}$ satisfying 
  $$
  w_\sigma(\y,s,t)=\begin{cases}
  -y_nw_{\sigma'}(\y,s,t)&\textrm{if $\sigma(n)=n$},\\
  -st^n w_{\sigma'}(\y,s,t)&\textrm{if $\sigma(n)=\bar n$}.
  \end{cases}
  $$
  It follows that
\begin{align}\label{eq:B'}
	\sum_{\sigma\in B'_n }w_\sigma(\y,s,t)
	=-(y_n+st^n)Q_{B_{n-1}}(\y,s,t).
\end{align}
Let  
$$
C_n^k=\{\sigma\in B''_n: |\sigma(2k-1)|=n\},
$$
where $k\in \{1, \ldots, (n-1)/2\}$ if $n$ is odd and 
$k\in \{1, \ldots, (n-2)/2\}$ if $n$ is even.
Then the mapping $\sigma\mapsto (\sigma(2k-1), \sigma(2k))\circ \sigma$ is a \textsc{wpsr} involution on $C_n^k$. Therefore
\begin{align}\label{eq:BE}
\sum_{\sigma\in B_n''}w_\sigma(\y,s,t)=
\sum_{\sigma\in E_n}w_\sigma(\y,s,t)
\end{align}
with $E_n=\{\sigma\in B''_n: |\sigma(n-1)|=n\}$ if $n$ is even and $E_n=\emptyset$ if $n$ is odd.

If $n$ is even, 
for $\sigma\in E_n$ let  $\sigma'$ be the restriction of $\sigma$ on $[n]\setminus \{n-1\}$.  
Then $\phi_2: \sigma\mapsto \sigma'$ is a bijection 
 from $E_n$  to $\{n, \, \bar n\}\times B_{n-1}$ satisfying 
  $$
  w_\sigma(\y,s,t)=\begin{cases}
  w_{\sigma'}(\y,s,t)&\textrm{if $\sigma(n-1)=n$},\\
  st^n w_{\sigma'}(\y,s,t)&\textrm{if $\sigma(n-1)=\bar n$}.
  \end{cases}
  $$
Hence, if $n$ is even,
then
\begin{align}\label{eq:E}
\sum_{\sigma\in E_n}w_\sigma(\y,s,t)=(1+st^n)\cdot Q_{B_{n-1}}(\y,s,t). 
\end{align}
It follows from \eqref{eq:B'}, \eqref{eq:BE} and \eqref{eq:E} that
$$ 
Q_{B_n}(\y,s,t)=
\begin{cases}
-(y_n+st^n)\cdot Q_{B_{n-1}}(\y,s,t)&\textrm{if $n$ is odd};\\
-(y_n-1)\cdot Q_{B_{n-1}}(\y,s,t)&\textrm{if $n$ is even}.
   \end{cases}
   $$
Invoking the initial value of $Q_{B_{1}}(\y,s,t)$, 
this recurrence implies \eqref{eq:RLM_B}.
\end{proof}
\begin{rem} When $s=0$ eq. \eqref{eq:RLM_B} reduces to~\cite[Corollary 22]{AG21a}, see also \cite[Corollary 36]{AG21}:
\begin{align}\label{ag21:RLMv_A}
	\sum_{\sigma\in\S_n}(-1)^{\cyc(\sigma)}\prod_{i\in\mathrm{RLMv}(\sigma)}y_i=(-1)^n \prod_{j\in [n]\atop
		\textrm{j odd}} y_j\cdot \prod_{j\in [n]\atop\textrm{j even}} (y_j-1)
\end{align}
which  is also the $q=-1$ case of a formula 
due to  Bj\"orner and Wachs~\cite[(5.3)]{BW91}, 
\begin{align}\label{Sn-xs}
	\sum_{\sigma\in\S_n}q^{\inv (\sigma)}
	\prod_{j\in\mathrm{RLMv}(\sigma)}y_j
	=\prod_{i=1}^{n}(y_i+q+\cdots+q^{i-1}).
\end{align}
Note that Poznanovi\'c \cite{Po14} and Eu et al. \cite{ELW15}
have given far-reaching generalizations of  \eqref{Sn-xs} to 
 other types of permutations  and  colored permutations.
\end{rem}

\section{Acknowledgements}
The first author was
supported by the \emph{China Scholarship Council}.
This work was done during
her  visit  at  Universit\'e
Claude Bernard Lyon 1 in 2021-2022.

\end{document}